\documentclass[12pt,a4paper]{article}
\usepackage[utf8]{inputenc}
\usepackage{lmodern}
\usepackage[T1]{fontenc}
\usepackage[english]{babel}
\usepackage{ifpdf}
\usepackage[usenames,dvipsnames]{xcolor}
\usepackage{graphicx}
\usepackage[shortlabels]{enumitem}
\usepackage[font={small,it}]{caption}
\usepackage[parfill]{parskip}
\usepackage{hyperref}
\usepackage[top=2cm,bottom=2.5cm]{geometry}
\usepackage[setbuildercolon]{matmatix}

\ifpdf
  \hypersetup{
    pdfauthor = {François Bienvenu}, 
    pdftitle = {Positive association of the oriented percolation cluster
                in randomly oriented graphs}
  }
\fi

\hypersetup{colorlinks, allcolors = {MidnightBlue}}

\newenvironment{mathlist}
{\begin{enumerate}[label={\upshape(\roman*)}, align=left, widest=iii, leftmargin=*]}
{\end{enumerate}\ignorespacesafterend}

\begingroup
 \makeatletter
 \@for\theoremstyle:=definition,remark,plain\do{%
  \expandafter\g@addto@macro\csname th@\theoremstyle\endcsname{%
  \addtolength\thm@preskip\parskip }%
 }
\endgroup

\newtheorem{theorem}{Theorem}[section]
\newtheorem{lemma}[theorem]{Lemma}

\newtheorem{corollary}[theorem]{Corollary}

\theoremstyle{definition}
\newtheorem{definition}{Definition}[section]
\newtheorem*{remark*}{Remark}

\newcommand{\hardcoded}[1]{#1}

\newcommand{\leadstoin}[1]{\overset{\scriptscriptstyle #1}{\leadsto}}
\newcommand{\dleadstoin}[1]{\overset{#1}{\leadsto}}
\newcommand{\toin}[1]{\overset{\scriptscriptstyle #1}{\to}}


\title{\hardcoded{\vspace{-3em}}Positive association of the oriented
percolation cluster in randomly oriented graphs}

\author{François Bienvenu\,\thanks{\scriptsize Center for Interdisciplinary
Research in Biology (CIRB), CNRS UMR 7241, Collège de France, Paris, France.}
\thanks{\scriptsize Laboratoire de Probabilités, Statistique et Modélisation (LPSM),
CNRS UMR 8001, Sorbonne Université, Paris, France\newline
E-mail: \href{mailto:francois.bienvenu@normalesup.org}{\sf francois.bienvenu@normalesup.org}}}

\begin{document}

\maketitle

\begin{abstract}
Consider any fixed graph whose edges have been randomly and independently
oriented, and write $\{S \leadsto i\}$ to indicate that there is an oriented
path going from a vertex $s \in S$ to vertex~$i$. Narayanan (2016) proved that
for any set $S$ and any two vertices $i$ and $j$, $\{S \leadsto i\}$ and $\{S
\leadsto j\}$ are positively correlated. His proof relies on the
Ahlswede-Daykin inequality, a rather advanced tool of probabilistic
combinatorics.
  
In this short note, I give an elementary proof of the following, stronger
result: writing $V$ for the vertex set of the graph, for any source set $S$,
the events $\{S \leadsto i\}$, $i \in V$, are positively associated -- meaning
that the expectation of the product of increasing functionals of the
family $\{S \leadsto i\}$ for $i \in V$ is greater than the product of their
expectations.
\end{abstract}

\section{Introduction}

Oriented percolation is the study of connectivity in a random oriented graph.
In most settings, one starts from a graph with a fixed orientation and then
keeps each edge with a given probability. Classical such models include
the north-east lattice~\cite{DurrettAnnProb1984} and the
hypercube~\cite{FillAnnApplProb1993}.

Another broad and natural class of random oriented graphs is
obtained by starting from a fixed graph and then orienting each edge,
independently of the orientations of other edges. Note that, in the general
case, the orientations of the edges need not be unbiased: some edges can be
allowed to have a higher probability to point towards one of their ends than
towards the other. Percolation on such \emph{randomly oriented graphs}
has been studied, e.g.\ in~\cite{LinussonArXiv2009}, and more recently in
\cite{NarayananCombinatProbComput2016}, which motivated the present work.

In \cite{NarayananCombinatProbComput2016}, Narayanan showed that if the edges
of any fixed graph are randomly and independently oriented, then writing
$\{S \leadsto i\}$ to indicate that there is an oriented path going from a
vertex $s \in S$ to vertex~$i$, we have
\[
  \Prob{S \leadsto i, S \leadsto j} \;\geq\;
  \Prob{S \leadsto i}\, \Prob{S \leadsto j} \,.
\]
The aim of this note is to strengthen and simplify the proof of this result.
More specifically, let $V$ be the vertex set of the graph. We prove that the
events $\{S \leadsto i\}$, $i \in V$, are positively associated, without
resorting to advanced results such as the Ahlswede--Daykin
inequality~\cite{AhlswedeDaykinProbTheorRF1978}.

\subsection{Positive association and related notions}

There are many ways to formalize the idea of a positive dependence between
the random variables of a family $\mathbf{X} = (X_i)_{i \in I}$.
A straightforward, weak one is to ask that these variables be pairwise
positively correlated, i.e.
\[
  \forall i, j \in I, \quad
  \Expec{X_i X_j} \;\geq\; \Expec{X_i}\, \Expec{X_j} \, .
\]
A much stronger condition, due to~\cite{EsaryAnnMathStat1967},
is known as positive association.  In the following definition and throughout
the rest of this note, we use bold letters to denote vectors, as in
$\mathbf{X} = (X_i)_{i \in I}$, and we write $\mathbf{X} \leq \mathbf{X}'$ to
say that $X_i \leq X'_i$ for all $i$.  Finally, a function
$f\colon \R^I \to \R$ is said to be \emph{increasing} when
$\mathbf{X} \leq \mathbf{X'} \implies f(\mathbf{X}) \leq f(\mathbf{X}')$.

\begin{definition} \label{defPositiveAsso}
The random vector $\mathbf{X} = (X_i)_{i \in I}$ is said to be
\emph{positively associated} when, for all increasing functions $f$ and $g$,
\[
  \Expec{f(\mathbf{X})g(\mathbf{X})} \;\geq\;
  \Expec{f(\mathbf{X})}\,\Expec{g(\mathbf{X})}
\]
whenever these expectations exist.
\end{definition}

Without further mention, we only consider test functions $f$ and $g$ for which
$\Expec{f(\mathbf{X})}$, $\Expec{g(\mathbf{X})}$ and
$\Expec{f(\mathbf{X})g(\mathbf{X})}$ exist.

We say that the events $A_i$, $i \in I$, are positively associated when the
corresponding vector of indicator variables $(\Indic*{A_i})_{i \in I}$ is
positively associated. Similarly, a random subset $R$ of the
fixed set $I$ can be seen as the vector
\[
  \mathbf{R} = \big(\Indic{i \in R}\big)_{i \in I}\,, 
\]
so that $R$ is said to be positively associated when the events $\Set{i \in R}$,
$i \in I$, are. This is equivalent to saying that for any
increasing functions $f$ and $g$ from the power set of $I$ to $\R$,
\[
  \Expec{f(R)g(R)} \;\geq\; \Expec{f(R)}\,\Expec{g(R)}\,,
\]
where $f$ being increasing is understood to mean that
${r' \subset r \implies f(r') \leq f(r)}$.

Positive association is famous for the FKG theorem, which states that it is
implied by a lattice condition that can sometimes be very easy to
check~\cite{FortuinCommMathPhy1971}. Another reason why it is so useful is
that it implies weaker positive dependence notions that have to be checked in
applications. One example of this is the existence of increasing couplings
and the corresponding notion of \emph{positive relation} used in the
Stein--Chen method -- see e.g, \cite{Barbour1992}
and~\cite{RossProbabilitySurveys2011}.

\subsection{Notation}

Let us fix some notation to be used throughout the rest of this document.

We study the simple graph $G = (V, E)$. Unless explicitly specified otherwise,
$V$ is assumed to be finite and we denote by $|V|$ its cardinality.
The edges of $G$ have a random orientation that is independent of the
orientations of other edges and we write $\Set{i \to j}$ to indicate
that the edge~$\{ij\}$ is oriented towards~$j$. Formally, we are
thus given a family of events $(\Set{i \to j},\, \{ij\} \in E)$ such that
$\Set{i \to j} = \Complement*{\Set{j \to i}}$ and 
for all $\{ij\}$, $\{i \to j\} \independent (\{k \to \ell\}, \{k\ell\} \neq \{ij\})$.

Finally, for every pair of vertices $i$ and $j$, we write
$\Set{i \leadsto j}$ for the event that there exists an oriented path going
from $i$ to $j$. Similarly, for every source set $S$ we let
$\Set{S \leadsto i} = \bigcup_{j \in S} \Set{j \leadsto i}$ be the event
that there is an oriented path from $S$ to $i$, and for
every target set $T$ we let
$\Set{i \leadsto T} = \bigcup_{j \in T} \Set{i \leadsto j}$ be the event
that there is an oriented path from $i$ to $T$.
If there is an ambiguity regarding which graph is considered for these
events, we will specify it with the notation $\Set*{i \leadstoin{G} j}$.

\section{Positive association of the percolation cluster}

\subsection{Preliminary lemma}

\begin{lemma} \label{lemmapositiveassociation}
Let $\Gamma$ be a finite set and let $R$ be a positively associated
random subset of $\Gamma$.
Let $X^r_i$, $r \subset \Gamma$ and $i \in V$, be a family of
events on the same probability space as $R$ with the property that
\begin{mathlist}
\item $r' \subset r \implies  X^{r'}_i \subset X^{r}_i$, $\forall i \in V$.
\item For all $r \subset \Gamma$, $(X^r_i)_{i \in V}$ is positively
associated and independent of $R$.
\end{mathlist}
For all $i \in V$, define $X^{R}_i$ by
\[
  X^R_i \defas \bigcup_{r \subset \Gamma} \Set{R = r} \cap X^r_i \,.
\]
Then, the events $X^R_i$, $i \in V$, are positively associated.
\end{lemma}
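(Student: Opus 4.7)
My plan is to prove the lemma by conditioning on $R$ and then exploiting the two levels of positive association (that of $R$ inside $\Gamma$, and that of each family $(X^r_i)_{i \in V}$) separately. Let $f$ and $g$ be increasing functions on $\{0,1\}^V$; I want to show $\Expec{f(\mathbf{X}^R)g(\mathbf{X}^R)} \geq \Expec{f(\mathbf{X}^R)}\Expec{g(\mathbf{X}^R)}$, where $\mathbf{X}^R = (X^R_i)_{i \in V}$.

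The first step is to condition on $R$. Because $R$ and the family $(X^r_i)_{i \in V}$ are independent (for every fixed $r$), conditioning on $\{R = r\}$ just replaces $\mathbf{X}^R$ by $\mathbf{X}^r$ without changing the law of the latter. Setting
\[
  F(r) \defas \Expec{f(\mathbf{X}^r)}, \qquad G(r) \defas \Expec{g(\mathbf{X}^r)},
\]
the positive association of $(X^r_i)_{i \in V}$ for each fixed $r$ gives, after taking expectations,
\[
  \Expec{f(\mathbf{X}^R)g(\mathbf{X}^R)} \;\geq\; \Expec{F(R)\,G(R)}.
\]
Symmetrically, independence yields $\Expec{F(R)} = \Expec{f(\mathbf{X}^R)}$ and $\Expec{G(R)} = \Expec{g(\mathbf{X}^R)}$.

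The second step is to apply positive association of $R$ to the functions $F$ and $G$ on the power set of $\Gamma$. For this I need to check that $F$ and $G$ are increasing in the subset sense. This is where hypothesis (i) enters: if $r' \subset r$, then pointwise on the common probability space $\Indic{X^{r'}_i} \leq \Indic{X^r_i}$ for every $i$, so $\mathbf{X}^{r'} \leq \mathbf{X}^r$ as vectors of Bernoullis, and by monotonicity of $f$ we get $f(\mathbf{X}^{r'}) \leq f(\mathbf{X}^r)$ pointwise, hence $F(r') \leq F(r)$. Once this is in hand, positive association of $R$ gives $\Expec{F(R)G(R)} \geq \Expec{F(R)}\Expec{G(R)}$, and chaining the inequalities closes the argument.

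I do not expect any serious obstacle: the proof is essentially a tower-of-expectation computation. The one point that deserves care is verifying that $F$ and $G$ are truly increasing as functions of subsets, because the definition of $X^R_i$ involves a union over disjoint events $\{R = r\}$, which might seem to break the monotone coupling. The resolution is that hypothesis (i) provides a monotone coupling of the $\mathbf{X}^r$ on a single probability space, so the pointwise domination survives expectation; this is also the only place where hypothesis (i) is actually used.
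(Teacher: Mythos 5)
Your proof is correct and follows essentially the same route as the paper's: condition on $R$, apply positive association of each $\mathbf{X}^r$ under the conditional law (using independence), then apply positive association of $R$ to the increasing functions $F$ and $G$ (the paper calls them $u$ and $v$), whose monotonicity comes from hypothesis (i) exactly as you argue. No gaps.
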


\begin{proof}
Let $f$ and $g$ be two increasing functions. We have
\begin{align*}
  \Expec{f(\mathbf{X}^R) g(\mathbf{X}^R)}
  &= \sum_{r \subset \Gamma}
     \Expec*{\big}{f(\mathbf{X}^r)g(\mathbf{X}^r) \Indic{R = r}} \\
  &= \sum_{r \subset \Gamma} 
     \Expec*{\big}{f(\mathbf{X}^r)g(\mathbf{X}^r)}\, \Prob{R = r}  \\
  &\geq \sum_{r \subset \Gamma}
     \Expec{f(\mathbf{X}^r)}\, \Expec{g(\mathbf{X}^r)} \,  \Prob{R = r}\, , 
\end{align*}
because $\mathbf{X}^r \independent R$
and $\mathbf{X}^r$ is positively associated.
Now, let $u \colon r \mapsto \Expec{f(\mathbf{X}^r)}$ and
$v \colon r \mapsto \Expec{g(\mathbf{X}^r)}$, so that the last sum
is $\Expec{u(R) v(R)}$. Note that $u$ and $v$ are increasing, since $f$ and
$g$ are and, by hypothesis,
$r' \subset r \implies \mathbf{X}^{r'} \leq \mathbf{X}^r$.
Therefore, by the positive association of $R$,
\begin{align*}
  \Expec*{\big}{u(R) v(R)} \;
  &\geq\; \Expec{u(R)}\, \Expec{v(R)} \, .
\end{align*}
Finally, using again the independence of $\mathbf{X}^r$ and $R$,
we have $\Expec{u(R)} = \Expec*{\normalsize}{f(\mathbf{X}^R)}$ and
$\Expec{v(R)} = \Expec*{\normalsize}{g(\mathbf{X}^R)}$, 
which concludes the proof.
\end{proof}

\subsection{Main result}

\begin{theorem} \label{thmMainResult}
Let $G$ be a finite graph with vertex set $V$, whose edges have been
randomly and independently oriented. Then, for any source set $S$,
the events $\Set{S \leadsto i}$, $i \in V$, are positively associated, i.e.,
for all increasing functions $f$ and $g$ and writing
$\mathbf{X} = (\Indic{S \leadsto i})_{i \in V}$,
\[
  \Expec{f(\mathbf{X}) g(\mathbf{X})} \;\geq\;
  \Expec{f(\mathbf{X})}\, \Expec{g(\mathbf{X})} \, .
\]
\end{theorem}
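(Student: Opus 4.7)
The plan is to induct on $|E|$ and reduce the inductive step to Lemma~\ref{lemmapositiveassociation}. The base case $|E| = 0$ is immediate, since each $\Indic{S \leadsto i}$ is then deterministic (equal to $\Indic{i \in S}$); the same observation also dispatches the degenerate case in which no edge of $G$ is incident to~$S$.

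For the main step, the idea is to perform one layer of a BFS-style exploration from~$S$. Split the edges into $E_1$, the edges incident to~$S$, and $E_2 = E \setminus E_1$, and let $G_2 = (V, E_2)$ carry the inherited random orientations. Revealing only the orientations of the edges in~$E_1$ exposes the random set of direct neighbours $D = \{v \notin S : s \to v \text{ for some } s \in S\}$, and we set $R = S \cup D$. Because $R$ is determined by the $E_1$-orientations alone, it is independent of everything measurable with respect to the orientations of~$G_2$.

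The key combinatorial identity is $\Set{S \leadsto i} = \Set{R \leadstoin{G_2} i}$ for every $i \in V$. Indeed, given an oriented path from $S$ to $i$ in $G$, its last visit to $S$ singles out a vertex of $D$ from which the remainder of the path stays outside~$S$ and therefore lies entirely in $G_2$; the reverse inclusion is immediate from $G_2 \subseteq G$ and the definition of~$D$. This recasts the events of interest in exactly the form $X^R_i$ of Lemma~\ref{lemmapositiveassociation}, with $\Gamma = V$ and $X^r_i = \Set{r \leadstoin{G_2} i}$.

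It only remains to verify the two hypotheses of the lemma. Monotonicity of $r \mapsto X^r_i$ is trivial, and for each fixed $r$ the family $(X^r_i)_{i \in V}$ is positively associated by the inductive hypothesis applied to $G_2$ (which has strictly fewer edges than~$G$) while being independent of~$R$ since the latter is $E_1$-measurable. Finally, $R$ itself is positively associated: the coordinates $\Indic{v \in R}$ with $v \in S$ are deterministic, and those with $v \notin S$ depend on pairwise disjoint subsets of the edges of~$E_1$, hence are mutually independent. The only non-routine step in the whole argument is this choice of decomposition: putting \emph{all} edges incident to $S$ into $E_1$ is exactly what makes $R$ positively associated for free and simultaneously forces $\Set{S \leadsto i}$ to factor as $\Set{R \leadstoin{G_2} i}$, so that the lemma applies cleanly.
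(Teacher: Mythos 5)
Your proof is correct and takes essentially the same route as the paper: reveal the orientations of the edges incident to $S$ to obtain a positively associated (because independent) random source set, and propagate positive association through Lemma~\ref{lemmapositiveassociation} using the induction hypothesis on the reduced graph. The only differences are cosmetic --- you induct on the number of edges rather than vertices, and you keep $S$ inside the reduced graph so that the certain events $\Set{S \leadsto i}$, $i \in S$, are absorbed into $R$ rather than re-adjoined at the end --- and both variants are sound.
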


\begin{proof}
Our proof uses the same induction on the number of vertices as Narayanan's.
The difference is that we use Lemma~\ref{lemmapositiveassociation} rather
than the Ahlswede--Daykin inequality to propagate the positive dependence.

The theorem is trivial for the graph consisting of a single vertex (a family
of a single variable being always positively associated) so let us assume that
it holds for every graph with strictly less than $\Abs{V}$ vertices.

Let $\Gamma$ be the neighborhood of $S$, i.e.\
\[
  \Gamma = \Set*[\big]{v \in V \setminus S \suchthat \exists s \in S \st \{vs\} \in E}\,.
\]
Then, let $R$ be the random subset of $\Gamma$ defined by
\[
  R = \Set*[\big]{v \in \Gamma \suchthat \exists s \in S \st s \to v}\,.
\]
Observe that the events $\Set{i \in R}$, $i \in \Gamma$ are independent, so that
the set $R$ is positively associated.

Next, let $H$ be the subgraph of $G$ induced by $V \setminus S$. Note that, for
all $i \in V \setminus S$,
\[
  \Set{S \leadstoin{G} i} = \Set{R \leadstoin{H} i} \, .
\]
For every fixed $r \subset \Gamma$, the family $\Set*{r \leadstoin{H} i}$ for
$i \in V\setminus S$ is independent of $R$ because it depends only on the
orientations of the edges of $H$, while $R$ depends only on the orientations
of the edges of $G$ that go from $S$ to $\Gamma$ -- and these two sets of edges
are disjoint.
Moreover, by the induction hypothesis, the events $\Set*{r \leadstoin{H} i}$,
$i \in V \setminus S$, are positively associated. Since for fixed sets $r$
and $r'$ such that $r' \subset r$,
$\Set{r' \leadsto i} \implies \Set{r \leadsto i}$ for all vertices, we can apply
Lemma~\ref{lemmapositiveassociation} to conclude that the events
$\Set{R \leadsto i}$, $i \in V \setminus S$, are positively associated.

To conclude the proof, note that the events $\Set{S \leadsto i}$ are certain
for $i \in S$ and that the union of a family of positively associated events
and of a family of certain events is still positively related.
\end{proof}

\subsection{Corollaries}

\begin{corollary}
Let $G$ be a finite graph with independently oriented edges.
For any target set $T$, the events $\Set{i \leadsto T}$, $i \in V$,
are positively associated.
\end{corollary}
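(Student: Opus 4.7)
The plan is to deduce the corollary from Theorem~\ref{thmMainResult} by a duality argument: reversing all edge orientations exchanges the roles of source and target. Concretely, I would build an auxiliary randomly oriented graph $G'$ on the same underlying graph $G = (V,E)$ by defining, for each $\{ij\} \in E$, the event $\Set{i \toin{G'} j} \defas \Set{j \toin{G} i}$. Since the orientations $\Set{i \toin{G} j}$ are independent across edges, so are the orientations $\Set{i \toin{G'} j}$; thus $G'$ falls within the setting of Theorem~\ref{thmMainResult}.

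Next, I would observe that a sequence of vertices $i = v_0, v_1, \ldots, v_k = j$ forms an oriented path in $G$ precisely when the reversed sequence $j = v_k, v_{k-1}, \ldots, v_0 = i$ forms an oriented path in $G'$. Consequently, $\Set*{i \leadstoin{G} j} = \Set*{j \leadstoin{G'} i}$ as events, and taking unions over $j \in T$ gives
\[
  \Set*{i \leadstoin{G} T} \;=\; \Set*{T \leadstoin{G'} i}
\]
for every $i \in V$.

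Applying Theorem~\ref{thmMainResult} to $G'$ with source set $T$ then yields that the events $\Set*{T \leadstoin{G'} i}$, $i \in V$, are positively associated; by the identity above, the events $\Set*{i \leadstoin{G} T}$, $i \in V$, are positively associated as well, which is what was to be shown.

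There is no real obstacle here — the only point to be careful about is that Theorem~\ref{thmMainResult} requires independence of the edge orientations (not any particular bias), which is preserved under pointwise reversal. The argument is essentially the observation that oriented percolation from a source is the same as oriented percolation to a target in the reversed graph.
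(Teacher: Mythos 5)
Your proposal is correct and is essentially identical to the paper's own proof: the paper likewise defines the reversed randomly oriented graph $H$ via $\Set*{i \toin{H} j} = \Set*{j \toin{G} i}$, notes that $\Set*{i \leadstoin{G} T} = \Set*{T \leadstoin{H} i}$, and applies Theorem~\ref{thmMainResult} to $H$ with source set $T$. Your additional remarks on independence being preserved under reversal are accurate but not needed beyond what the paper states.
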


\begin{proof}
Consider the randomly oriented graph $H$ obtained by reversing the orientation
of the edges of $G$, i.e.\ such that $\Set*{i \toin{H} j} = \Set*{j \toin{G} i}$.
Then for all $i \in V$,
\[
  \Set*{i \leadstoin{G} T} = \Set*{T \leadstoin{H} i}\, , 
\]
and we already know from Theorem~\ref{thmMainResult} that the events
$\Set*{T \leadstoin{H} i}$, $i \in V$, are positively associated.
\end{proof}

\begin{corollary} \label{corInfiniteGraph}
Let $G$ be an infinite graph with independently oriented edges.
Let $f$ and $g$ be increasing, non-negative functions on $\R^V$
that depend only on a finite number of coordinates
(i.e.\ such that there exists a finite set $U \subset V$ and
$\tilde{f} \colon \R^U \to \COInterval{0, +\infty}$ such that
$f = \tilde{f} \circ \varphi$, where $\varphi$ is the canonical
surjection from $\R^V$ to $\R^U$).  Then, for any source set $S$, letting
$\mathbf{X} = (\Indic{S \leadsto i})_{i \in V}$,
\[
  \Expec{f(\mathbf{X}) g(\mathbf{X})} \;\geq\;
  \Expec{f(\mathbf{X})}\, \Expec{g(\mathbf{X})} \, .
\]
\end{corollary}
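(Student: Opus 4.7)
My plan is to approximate $G$ by an increasing sequence of finite induced subgraphs and then pass to the limit via monotone convergence. Let $U \subset V$ be the finite set of coordinates on which both $f$ and $g$ depend, and fix an increasing sequence of finite subsets $(V_n)_{n \geq 1}$ of $V$ with $U \subset V_1$ and $\bigcup_n V_n = V$. Let $G_n$ denote the subgraph of $G$ induced by $V_n$, set $S_n = S \cap V_n$, and define the random vector $\mathbf{X}^{(n)} \in \R^V$ by $\mathbf{X}^{(n)}_i = \Indic{S_n \leadstoin{G_n} i}$ for $i \in V_n$ and $\mathbf{X}^{(n)}_i = 0$ otherwise.

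The first step is to apply Theorem~\ref{thmMainResult} to the finite graph $G_n$ with source set $S_n$, which says that the subvector of $\mathbf{X}^{(n)}$ indexed by $V_n$ is positively associated. Padding with deterministic zero coordinates at indices outside $V_n$ preserves this property, since any increasing pair of functions $f, g \colon \R^V \to \R$ becomes, once the external coordinates are fixed to $0$, an increasing pair of functions of the random subvector. Consequently,
\[
  \Expec{f(\mathbf{X}^{(n)}) g(\mathbf{X}^{(n)})} \;\geq\;
  \Expec{f(\mathbf{X}^{(n)})}\, \Expec{g(\mathbf{X}^{(n)})} \, .
\]

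The second step is to verify that $\mathbf{X}^{(n)}$ converges pointwise and monotonically to $\mathbf{X}$ as $n \to \infty$. Since $(V_n)$ and $(S_n)$ are both non-decreasing, the indicator $\mathbf{X}^{(n)}_i$ is non-decreasing in $n$, and its limit equals $\Indic{S \leadsto i}$ because any oriented path in $G$ from $S$ to $i$ uses only finitely many vertices and edges, and therefore lies in $G_n$ with starting vertex in $S_n$ for all sufficiently large $n$. Because $f$ and $g$ are non-negative and increasing, $f(\mathbf{X}^{(n)})$, $g(\mathbf{X}^{(n)})$ and their product form non-negative, non-decreasing sequences converging pointwise to $f(\mathbf{X})$, $g(\mathbf{X})$ and $(fg)(\mathbf{X})$ respectively.

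The final step is to apply the monotone convergence theorem to each of the three expectations in the inequality displayed above to obtain the result. I expect no substantial obstacle: the argument is a straightforward exhaustion, and the only minor point requiring care is the extension-by-zero of the first step, which one could alternatively avoid altogether by stating positive association of $\mathbf{X}^{(n)}$ over $\R^{V_n}$ and observing that $f$ and $g$ only depend on coordinates inside $U \subset V_n$ anyway.
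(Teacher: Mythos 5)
Your proposal is correct and follows essentially the same route as the paper: exhaust $G$ by an increasing sequence of finite subgraphs, apply Theorem~\ref{thmMainResult} to each, and pass to the limit by monotone convergence, using the finite-dependence of $f$ and $g$ to identify the limits as $f(\mathbf{X})$ and $g(\mathbf{X})$. Your write-up is in fact a little more careful than the paper's on the minor points (restricting the source set to $S \cap V_n$, and justifying the extension by zero outside $V_n$), but the argument is the same.
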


\begin{proof}
Let $G_n$ be an increasing sequence of finite graphs such that
$G = \bigcup_n G_n$, and for all $i \in V$, let
\[
  X_i^{(n)} \,=\; \Set*[\big]{S \dleadstoin{G_n} i} \,, 
\]
so that $X_i^{(n)} \subset X_i^{(n + 1)}$ and
$X_i = \bigcup_n X_i^{(n)}$. Since the functions $f$ and $g$ are increasing,
so are the sequences $f(\mathbf{X}^{(n)})$ and $g(\mathbf{X}^{(n)})$. Thus,
using Theorem~\ref{thmMainResult} and monotone convergence,
\[
  \Expec{\lim_n f\big(\mathbf{X}^{(n)}\big) g\big(\mathbf{X}^{(n)}\big)}
  \;\geq\; \Expec{\lim_n f\big(\mathbf{X}^{(n)}\big)}\,
  \Expec{\lim_n g\big(\mathbf{X}^{(n)}\big)} \, .
\]
Finally, if $f$ and $g$ depend on a finite number of events $X_i$, then for
every realization of $\mathbf{X}$ we have
$\lim_n f(\mathbf{X}^{(n)}) = f(\mathbf{X})$ and
$\lim_n g(\mathbf{X}^{(n)}) = g(\mathbf{X})$.
\end{proof}

\begin{corollary}[Narayanan, 2016]
For any (possibly infinite) graph with independently oriented edges, for
any source set $S$ and for any two vertices $i$ and $j$,
\[
  \Prob{S \leadsto i, S \leadsto j} \;\geq\;
  \Prob{S \leadsto i}\, \Prob{S \leadsto j}
\]
\end{corollary}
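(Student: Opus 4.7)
The plan is to apply Corollary \ref{corInfiniteGraph} directly, choosing test functions that pick out the two coordinates of interest. Specifically, I would take $f, g \colon \R^V \to \R$ defined by $f(\mathbf{x}) = \max(0, \min(1, x_i))$ and $g(\mathbf{x}) = \max(0, \min(1, x_j))$ (or, since we only ever evaluate them on $\{0,1\}$-valued vectors $\mathbf{X} = (\Indic{S \leadsto k})_{k \in V}$, simply the coordinate projections on $i$ and $j$). These are increasing, non-negative, and each depends on only one coordinate, hence on a finite number of coordinates, so the hypotheses of Corollary \ref{corInfiniteGraph} are satisfied in both the finite and the infinite case.

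With this choice, $f(\mathbf{X}) = \Indic{S \leadsto i}$ and $g(\mathbf{X}) = \Indic{S \leadsto j}$, and therefore
\[
  \Expec{f(\mathbf{X})} = \Prob{S \leadsto i}, \quad
  \Expec{g(\mathbf{X})} = \Prob{S \leadsto j}, \quad
  \Expec{f(\mathbf{X})g(\mathbf{X})} = \Prob{S \leadsto i,\, S \leadsto j}.
\]
The inequality provided by Corollary \ref{corInfiniteGraph} (or directly by Theorem \ref{thmMainResult} in the finite case) then immediately yields the desired conclusion.

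There is essentially no obstacle: the only thing to verify is that the coordinate projections qualify as increasing, non-negative test functions depending on finitely many coordinates, which is immediate. The corollary is really just the pairwise specialization of the positive-association statement already established, with the extension to infinite graphs handled by Corollary \ref{corInfiniteGraph}.
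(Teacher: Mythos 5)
Your proposal is correct and follows exactly the paper's own proof, which simply takes $f\colon (x_k)_{k \in V} \mapsto x_i$ and $g\colon (x_k)_{k \in V} \mapsto x_j$ in Corollary~\ref{corInfiniteGraph}. The extra care you take to make the projections bounded and to check the hypotheses explicitly is harmless but not needed beyond what the paper states.
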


\begin{proof}
Take $f \colon (x_k)_{k \in V} \mapsto x_i$
and $g \colon (x_k)_{k \in V} \mapsto x_j$ in Corollary~\ref{corInfiniteGraph}.
\end{proof}

\begin{corollary} \label{corPoissonCluster}
Let $G$ be a finite graph with independently oriented edges and vertex set~$V$.
For any source set~$S$, let
\[
  N = \sum_{i \in V \setminus S}\! \Indic{S \leadsto i}
\]
denote the size of the oriented percolation cluster of $G$, and set
$\lambda = \Expec{N}$. Then,
\[
  d_{\mathrm{TV}}\big(N,\, \mathrm{Poisson}(\lambda)\big) \;\leq\;
  \min(1, \lambda^{-1})
  \left(\Var{N} - \lambda +
        2 \sum_{i \in V \setminus S}\!\! \Prob{S \leadsto i}^2 \right)\,,
\]
where $d_{\mathrm{TV}}$ denotes the total variation distance.
\end{corollary}

\begin{proof}
This is a direct application of the Stein--Chen method to the positively
related variables $\Indic{S \leadsto i}$, $i \in V \setminus S$ -- see e.g.\
Theorem~4.20 in \cite{RossProbabilitySurveys2011}.
\end{proof}

The interest of Corollary~\ref{corPoissonCluster} is that one only needs a
suitable upper bound on
$\Cov[\normalsize]{\Indic{S \leadsto i},\, \Indic{S \leadsto j}}$ to show that
the size of the oriented percolation cluster is Poissonian.

\section*{Acknowledgments}

I thank my adviser Amaury Lambert for a careful rereading of the first
draft of this manuscript, and Jean-Jil Duchamps for helpful
discussions. I also thank two anonymous reviewers. 

\bibliographystyle{abbrv}
\bibliography{biblio}

\end{document}